\theoremstyle{plain}
\newtheorem{thm}{Theorem}
\newtheorem{lemma}[thm]{Lemma}
\newtheorem{cor}[thm]{Corollary}
\theoremstyle{definition}
\newtheorem{example}[thm]{Example}
\newtheorem{defn}[thm]{Definition}
\newcommand{\of}[1]{\!\left({#1}\right)}
\newcommand{\e}{\varepsilon}
\newcommand{\de}{\delta}
\newcommand{\Z}{\mathbb{Z}}
\newcommand{\oddw}{\ensuremath{w_o}}
\newcommand{\OWP}{\ensuremath{W_K(t)}}
\newcommand{\B}[2]{{#1}--{#2}}
\begin{document}

\markboth{Alissa S. Crans, Sandy Ganzell, Blake Mellor}
{The Forbidden Number of a Knot}

\title{The Forbidden Number of a Knot}

\author{ALISSA S. CRANS}
\address{Department of Mathematics, Loyola Marymount University, 1 LMU Drive, Los Angeles, California 90045}
\email{acrans@lmu.edu}

\author{SANDY GANZELL}
\address{Department of Mathematics and Computer Science, St. Mary's College of Maryland, St. Mary's City, MD 20686}
\email{sganzell@smcm.edu}

\author{BLAKE MELLOR}
\address{Department of Mathematics, Loyola Marymount University, 1 LMU Drive, Los Angeles, California 90045}
\email{blake.mellor@lmu.edu}

\maketitle

\begin{abstract}
Every classical or virtual knot is equivalent to the unknot via a sequence of extended Reidemeister moves and  the so-called forbidden moves. The minimum number of forbidden moves necessary to unknot a given knot is an invariant we call the {\it forbidden number}. We relate the forbidden number to several known invariants, and calculate bounds for some classes of virtual knots.
\end{abstract}
\section{Introduction}\label{intro}

The theory of virtual knots was introduced by Kauffman \cite{KauffmanVKT} as a generalization of classical knot theory, motivated in part by the desire to provide a bijective correspondence between knots and {\em Gauss diagrams}.  Like the classical theory, virtual knot theory has a useful diagrammatic approach since virtual diagrams can be thought of as 4-valent graphs with extra structure at the vertices. In the classical theory, this extra structure is indicated by overcrossings and undercrossings, whereas in the virtual theory, a third kind of crossing is allowed, namely virtual crossings, which are indicated in the diagrams with a small circle around the vertex.

Equivalence of virtual knots may be defined by means of a set of local moves (the extended Reidemeister moves) on their diagrams.\footnote{Equivalence may also be viewed as isotopy in thickened surfaces; see, for example, \cite{KauffmanINTRO}, \cite{CKS} or \cite{Kuperberg}.} Figure \ref{VRmoves} illustrates the extended Reidemeister moves: the classical Reidemeister moves $R1$, $R2$, $R3$, the virtual moves $V1$, $V2$, $V3$, and the semivirtual move $SV$.  We may then define a virtual knot to be an equivalence class of virtual diagrams modulo these moves.
\begin{figure}[htbp]
\begin{center}
\includegraphics{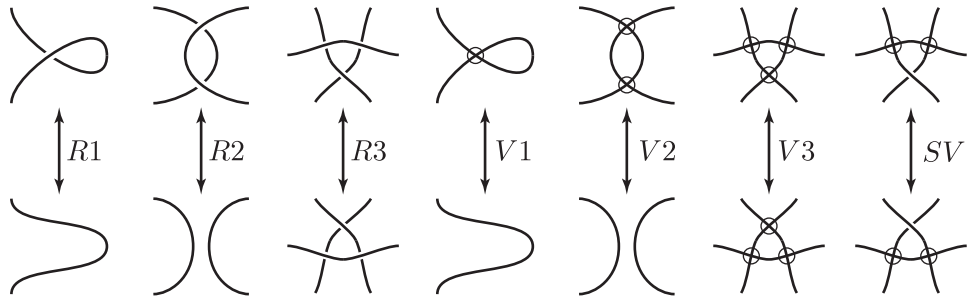}
\end{center}
\caption{The extended Reidemeister moves.}
\label{VRmoves}
\end{figure}
 
We remind the reader that it is important to restrict ourselves to these moves, and not consider physical movements in space, as our intuition can sometimes lead us astray with diagrams that contain virtual crossings. 
\begin{figure}[htbp]
\begin{center}
\includegraphics{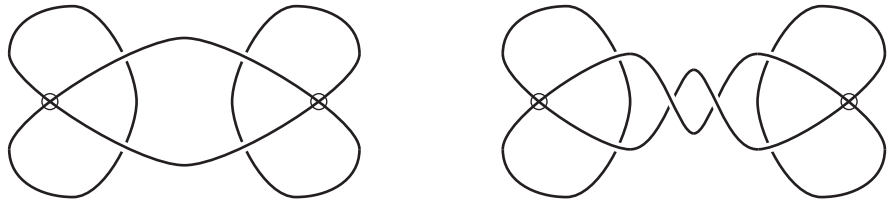}
\end{center}
\caption{Distinct virtual knots.}
\label{Kishino}
\end{figure}
For example, the virtual knots in Fig. ~\ref{Kishino} are distinct, even though they appear to differ only by a physical twist. These knots are distinguished by the arrow polynomial defined by Dye and Kauffman \cite{DyKa}.

Given an oriented (virtual) knot, we recall that its Gauss diagram is defined as follows:  First label all the classical crossings of the knot.  Next, traverse the knot, writing down the sequence of crossing labels (so each label appears twice); write this sequence around a circle.  Then add a chord to the circle for each crossing; each chord is drawn as an arrow directed from the label corresponding to going {\em over} a crossing to the label corresponding to going {\em under} the same crossing.  Finally, each arrow is also labeled with the sign of the crossing.
\begin{figure}[htbp]
\begin{center}
{\includegraphics{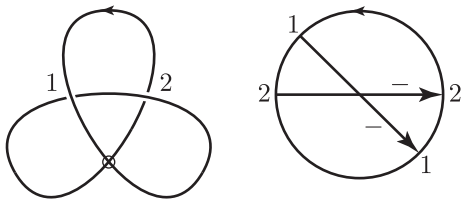}}
\end{center}
\caption{Gauss diagram of a virtual trefoil.}
\label{Gauss1}
\end{figure}
For example, Fig. ~\ref{Gauss1} shows the left-handed virtual trefoil knot and its corresponding Gauss diagram.  

The Reidemeister moves can be reinterpreted as moves on Gauss diagrams; the virtual moves $V1$, $V2$, $V3$ and $SV$ have no effect on the Gauss diagram.  While not every Gauss diagram can be realized as the diagram of a classical knot (the diagram in Fig. ~\ref{Gauss1} is an example), every Gauss diagram {\em can} be realized as the diagram of a virtual knot \cite{KauffmanVKT}.  It is often useful to think of virtual knots in terms of Gauss diagrams, since it avoids the pitfalls created by misinterpreting virtual knots as physical objects.

Since every classical or virtual knot is equivalent to the unknot via a sequence of the extended Reidmeister moves together with the {\em forbidden moves}, illustrated in Section \ref{forbidden}, we can consider the minimum number of forbidden moves necessary to unknot a given knot.  We define this invariant, the {\em forbidden number,} in Section \ref{forbidden} and provide an upper bound for the forbidden number of a knot in terms of its crossing number.  We continue in Section \ref{examples} by computing upper bounds on the forbidden number for the Kishino knots with $n$ full twists, as well as for twist knots and torus knots.  In Section \ref{ow} we relate the forbidden number to the odd writhe \cite{KaSelfLink} and odd writhe polynomial \cite{Cheng}, which enables us to establish lower bounds on the forbidden number, and compute several more examples.  Finally, in Section \ref{forbiddensmall} we apply our results to update Sakurai's table \cite{Sakurai} of forbidden numbers for knots with crossing number less than or equal to four.

\section{The forbidden number}\label{forbidden}

As mentioned in the Introduction, there are two additional Reidemeister-like moves for virtual knots, known as the \emph{forbidden moves}, illustrated in Fig. ~\ref{Fmoves}. Move $FO$ moves a strand of the diagram ``over'' a virtual crossing, while move $FU$ moves a strand ``under'' a virtual crossing. 
\begin{figure}[htbp]
\begin{center}
\includegraphics{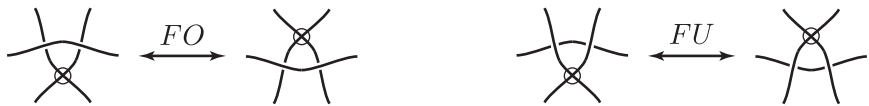}
\end{center}
\caption{Forbidden moves.}
\label{Fmoves}
\end{figure}
Neither of these moves can be obtained as a sequence of extended Reidemeister moves. 

Unlike the other virtual Reidemeister moves, the forbidden moves {\em do} change the Gauss diagram of a virtual knot.  The move $FO$ has the effect of switching the tails of two arrows in a Gauss diagram, while the move $FU$ switches the heads, as shown in Fig. ~\ref{FGauss}.
\begin{figure}[htbp]
\begin{center}
{\includegraphics{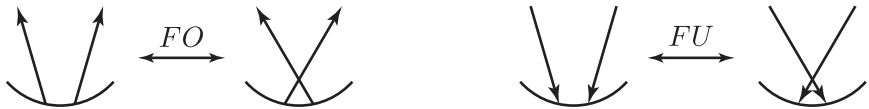}}
\end{center}
\caption{Forbidden moves on Gauss diagrams.}
\label{FGauss}
\end{figure}
Nelson \cite{Nelson} observed that using two forbidden moves (one of each), we can perform the \emph{forbidden detour} move shown in Fig. ~\ref{Fdetour}, which has the effect on Gauss diagrams of switching the head of one arrow with the tail of an adjacent arrow.

\begin{figure}[h]
\begin{center}
\includegraphics{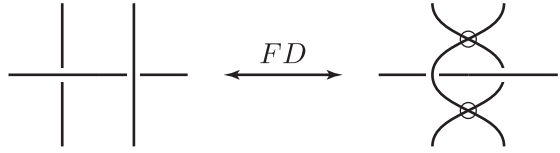}
\end{center}
\caption{Forbidden detour.}
\label{Fdetour}
\end{figure}

If we allow one forbidden move but not the other, we obtain what are known as \emph{welded knots}, developed by Satoh \cite{Satoh} and Kamada \cite{Kamada}. If we allow both forbidden moves, then any virtual knot can be transformed into any other virtual knot (\cite{GPV}, \cite{Kanenobu}, \cite{Nelson}; see also Thm. ~\ref{crossingbound} of this paper), hence the designation of these moves as forbidden.

In particular, every classical or virtual knot can be transformed into the unknot by a sequence of extended Reidemeister moves and forbidden moves. It is, therefore, natural to ask how many forbidden moves are necessary to unknot a given knot.

\begin{defn}\label{forbiddennumber}
Let $K$ be a classical or virtual knot. The \emph{forbidden number} of $K$, $F(K)$, is the minimum number of forbidden moves necessary to transform $K$ into the unknot.
\end{defn}

It is understood that Defn.\ \ref{forbiddennumber} is to be taken over all diagrams for $K$, and that unlimited extended Reidemeister moves are permitted during the transformation. Thus the forbidden number is an invariant of $K$, similar in spirit to the {\em unknotting number} (the minimum number of crossing changes needed to transform $K$ into the unknot) and the {\em virtual unknotting number} (the minimum number of classical crossings that need to be turned into virtual crossings to transform $K$ into the unknot).

It follows immediately that only the unknot has forbidden number zero, since a sequence of extended Reidemeister moves (without forbidden moves) that transforms $K$ into the unknot would place $K$ (by definition) in the same equivalence class as the unknot.

Recall that if $K$ is an oriented virtual knot, the \emph{inverse} of $K$, denoted by $K^{\ast}$, is obtained by reversing the orientation of $K$; the \emph{mirror image} of $K$, denoted by $\bar{K}$, is the result of switching all the classical crossings of $K$.

\begin{thm} $F(K^{\ast}) = F(K) = F\of{\bar{K}}$
\end{thm}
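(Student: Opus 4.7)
The plan is to prove both equalities by symmetry arguments. In each case I will show that an optimal unknotting sequence for $K$ can be converted, move-for-move, into an unknotting sequence for the other knot without changing the number of forbidden moves, giving one direction of inequality; the reverse inequality then follows because the involved operations are involutions.

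For $F(K^\ast) = F(K)$: Suppose we have a sequence $D_0 \to D_1 \to \cdots \to D_n$ of diagrams, where $D_0$ is a diagram of $K$, $D_n$ is a diagram of the unknot, and consecutive diagrams differ by an extended Reidemeister move or a forbidden move, with exactly $F(K)$ of the latter. Reverse the orientation of every strand of every $D_i$ simultaneously; this produces a sequence of diagrams $D_0^\ast, D_1^\ast, \ldots, D_n^\ast$ whose first diagram represents $K^\ast$ and whose last is still the unknot. The key observation is that the extended Reidemeister moves and the forbidden moves are all defined without reference to orientation (the strands are drawn without arrows in Fig.\ \ref{VRmoves} and Fig.\ \ref{Fmoves}), so each local change $D_i \to D_{i+1}$ remains exactly the same type of move after reversing orientation. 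In particular, the forbidden-move count is preserved, giving $F(K^\ast) \leq F(K)$. Applying the same argument to $K^\ast$ (using $(K^\ast)^\ast = K$) yields the reverse inequality.

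For $F(\bar K) = F(K)$: Again take an optimal sequence $D_0 \to \cdots \to D_n$ for $K$ and form $\bar D_0, \ldots, \bar D_n$ by switching every classical crossing of every $D_i$. Then $\bar D_0$ represents $\bar K$ and $\bar D_n$ is again the unknot. The extended Reidemeister moves $R1$, $R2$, $R3$, $V1$, $V2$, $V3$, $SV$ are symmetric under a global switch of all classical crossings: each move diagram, when all of its classical crossings are flipped, is again a valid instance of the same move (for $R1$ the sign changes, for $R2$ the two crossings swap roles, for $R3$ and $SV$ the picture is globally inverted, and the purely virtual moves are unaffected). The two forbidden moves $FO$ and $FU$ are interchanged by a global switch of classical crossings, since $FO$ moves an \emph{over}-strand past a virtual crossing while $FU$ moves an \emph{under}-strand, and an over-strand becomes an under-strand when its crossing is switched. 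In particular, each forbidden move in the original sequence corresponds to exactly one forbidden move in the new sequence, so the count is preserved and $F(\bar K) \leq F(K)$. Since $\bar{\bar K} = K$, equality follows.

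The only real content is verifying that each of the local moves is compatible with the two symmetries; this is essentially a picture-by-picture check using the figures already in the paper, and I expect no genuine obstacle beyond being explicit about the $FO \leftrightarrow FU$ exchange under mirroring and the orientation-free nature of the move diagrams.
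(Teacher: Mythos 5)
Your proposal is correct and takes essentially the same approach as the paper: both arguments transform an optimal unknotting sequence move-for-move under the relevant symmetry, using that orientation reversal preserves each move type and that mirroring exchanges the two forbidden moves $FO$ and $FU$ (so the forbidden-move count is unchanged), with equality following since both operations are involutions. The only difference is cosmetic: the paper phrases the symmetries at the level of Gauss diagrams (for $K^{\ast}$ the diagram is identical but traversed in the opposite direction; for $\bar{K}$ all arrows are reversed), while you carry out the same check directly on knot diagrams.
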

\begin{proof}
A knot and its inverse have identical Gauss diagrams, except the circle is traversed in the opposite direction. This has no bearing on either the extended Reidemeister moves or the forbidden moves. Thus $F(K^*)=F(K)$. To see that $F(K)=F(\bar{K})$, observe that taking a mirror image has the effect of reversing all the arrows in the Gauss diagram. Thus an unknotting sequence of extended Reidemeister moves and forbidden moves can be replaced by an identical sequence of moves, but replacing $FO$ moves with $FU$ and vice versa.
\end{proof}


\section{The forbidden number and the crossing number}

We recall that the {\it crossing number,} $c(K)$, of a knot is the minimum number of crossings of any diagram of $K$. To extend this definition to virtual knots, we simply ignore all virtual crossings. Thus the crossing number of a virtual knot $K$ is the minimum number of classical (i.e., non-virtual) crossings in any diagram of $K$. In this section, we will use the crossing number to derive upper bounds for the forbidden number.  We find these bounds using the forbidden moves and forbidden detours, along with the first Reidemeister move; the following lemma illustrates our method.

\begin{lemma}\label{crossingbound} For any virtual knot $K$, $F(K)\le \frac{c(c-1)}{2}+\left\lfloor\frac{(c-1)^2}{4}\right\rfloor$, where $c$ is the crossing number of $K$.
\end{lemma}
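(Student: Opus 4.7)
The plan is to induct on the number of chords in a Gauss diagram for $K$: starting from a $c$-crossing diagram, I will show that in any Gauss diagram with $n$ chords, one chord can always be isolated and removed using at most $\lfloor 3(n-1)/2 \rfloor$ forbidden moves (plus Reidemeister moves). Summing for $n=c, c-1, \ldots, 1$ then yields the claimed bound.

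For the reduction step, pick any arrow $A$ with head $h$ and tail $t$. Going clockwise from $h$ to $t$ around the circle, let $a^+$ be the number of heads and $b^+$ the number of tails of other arrows in between; let $a^-, b^-$ be the corresponding counts going counterclockwise. Then $a^+ + a^- = b^+ + b^- = n-1$. To make $h$ adjacent to $t$, walk $h$ one step at a time toward $t$: each step that passes an adjacent head costs one $FU$ move, and each step that passes an adjacent tail costs two moves (a forbidden detour, as described before Figure 6). Thus the cost of isolating $A$ going clockwise is $a^+ + 2b^+$, and counterclockwise is $a^- + 2b^-$. Choosing the cheaper direction, and using that the sum of the two costs is $(a^+ + a^-) + 2(b^+ + b^-) = 3(n-1)$, the minimum is at most $\frac{3(n-1)}{2}$, hence at most $\lfloor 3(n-1)/2 \rfloor$ since the number of moves is an integer. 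An $R1$ move then removes the now-isolated chord, leaving $n-1$ chords.

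Summing over the $c$ reduction steps gives
$$F(K) \;\le\; \sum_{k=0}^{c-1}\left\lfloor \frac{3k}{2}\right\rfloor.$$
Using the identity $\lfloor 3k/2 \rfloor = k + \lfloor k/2 \rfloor$, this equals $\frac{c(c-1)}{2} + \sum_{k=1}^{c-1}\lfloor k/2\rfloor$; splitting on the parity of $c-1$, one checks directly that $\sum_{k=1}^{c-1}\lfloor k/2 \rfloor = \lfloor (c-1)^2/4\rfloor$, completing the proof.

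The main content is the averaging argument for the isolation cost: a naive bound gives at most $2(n-1)$ moves (if every endpoint one must pass is a tail), but by choosing the cheaper of the two directions one saves a constant factor. The only technical point is to check that the required forbidden move is always available at each step, which is immediate because $FO$, $FU$, and the forbidden detour each act on an arbitrary pair of adjacent endpoints of the appropriate types, so the correct move is available each time $h$ is walked one position around the circle.
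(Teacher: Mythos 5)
Your proposal is correct and takes essentially the same approach as the paper: isolate one chord at a time via forbidden moves and detours at a cost of at most $\left\lfloor \frac{3(n-1)}{2} \right\rfloor$ per chord, remove it with $R1$, and sum $\sum \left\lfloor \frac{3k}{2} \right\rfloor$ to get the bound. The only cosmetic difference is in justifying the per-chord cost: you fix the head and average over the two directions (costs summing to $3(n-1)$), while the paper chooses the shorter arc and moves whichever of head or tail passes at most $\left\lfloor \frac{c-1}{2} \right\rfloor$ opposite-type ends --- both yield the identical per-step bound.
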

\begin{proof}
From the point of view of Gauss diagrams, we can unknot any virtual knot by moving ends of chords past each other to isolate each chord, and then eliminating chords using the first Reidemeister move.  In a diagram with $c$ crossings, isolating a chord requires moving its head or tail past at most $c-1$ ends of other chords, of which at most $\left\lfloor \frac{c-1}{2} \right\rfloor$ are of the opposite type (i.e., require moving a head past a tail or vice versa).  Moving a head/tail past an end of the same type requires one forbidden move; moving past an end of the opposite type is a forbidden detour which uses two forbidden moves.  So isolating and removing the first arrow requires at most $\left\lfloor \frac{3(c-1)}{2} \right\rfloor$ forbidden moves.  Doing this for each successive arrow (reducing the number of crossings each time) requires a total of at most:
$$\sum_{k=1}^{c-1}\left\lfloor\frac{3k}{2}\right\rfloor=\frac{c(c-1)}{2}+\left\lfloor\frac{(c-1)^2}{4}\right\rfloor$$
forbidden moves, which completes the proof.
\end{proof}

We will improve this bound by examining a particular family of virtual knots we call {\it complete knots}.  A complete knot has a Gauss diagram in which every arrow crosses every other arrow (so the {\it intersection graph} of the Gauss diagram is a complete graph, hence the name). For example, Fig. \ref{F:complete} shows all the possible Gauss diagrams for complete knots with 5 crossings (ignoring the signs of the crossings). Note that the only classical knots which are complete knots are the $(p,2)$-torus knots (for $p$ odd). In these cases, the heads and tails of the arrows alternate around the circle of the Gauss diagram (so the diagram on the left of Fig. \ref{F:complete} is the Gauss diagram for the $(5,2)$-torus knot).

\begin{figure}[htbp]
\begin{center}
\scalebox{1}{\includegraphics{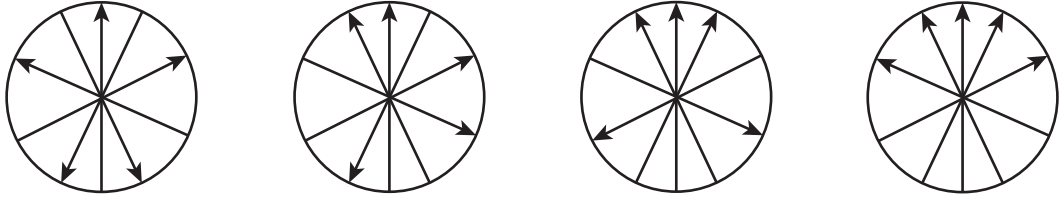}}
\end{center}
\caption{Gauss diagrams for complete knots with 5 crossings.}
\label{F:complete}
\end{figure}

\begin{lemma}\label{completebound}
If $K$ is a complete knot with crossing number $c$, and if $h$ is the length of the longest sequence of consecutive undercrossings in a minimal diagram for $K$, then $F(K)\le \frac{c(c-1)}{2}+\left\lfloor\frac{(c-h)^2}{4}\right\rfloor$.
\end{lemma}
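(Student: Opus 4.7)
The plan is to refine the strategy of Lemma~\ref{crossingbound} so that the run of $h$ consecutive heads lets us skip some of the forbidden detours. Recall that in that proof the cost of isolating each chord splits as a ``base'' cost (cost-$1$ same-type swaps, summing to $k-1$ per chord) plus ``detours'' (cost-$2$ opposite-type swaps), and the base total is $\sum_{k=1}^{c-1}k=c(c-1)/2$ regardless of strategy. Thus the entire improvement must come from cutting the detour total from $\lfloor(c-1)^2/4\rfloor$ down to $\lfloor(c-h)^2/4\rfloor$.

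First I would exploit completeness to pin down the structure. Each chord's two endpoints split the other $2(c-1)$ endpoints into two arcs of exactly $c-1$ endpoints, and one checks that if the run of $h$ consecutive heads occupies positions $1,\dots,h$ on the circle then the corresponding tails $T_1,\dots,T_h$ are forced to occupy the antipodal positions $c+1,\dots,c+h$. This rigidity is what will power the induction on $c$: when we isolate and R1-remove a chord from the run, the residual Gauss diagram is again complete, and its longest head-run has shrunk by at most one (the remaining heads of the original run are still consecutive).

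Next I would proceed inductively, choosing at each step whichever chord's isolation routes the moving endpoint through as many run heads as possible. Those run heads are traversed by same-type (cheap) moves, and detours only come from the remaining non-run positions in the arc. I expect to show that at iteration $j$ (with $k$ chords remaining and current run length $h_j$), the number of detours can be bounded by $\lfloor(k-h_j)/2\rfloor$ rather than the $\lfloor(k-1)/2\rfloor$ estimate of Lemma~\ref{crossingbound}. Provided $h_j$ stays at least $h-(j-1)$ for the first $h$ iterations, the detour counts telescope to $\lfloor(c-h)^2/4\rfloor$, which combined with the base $c(c-1)/2$ yields the claimed bound.

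The main obstacle will be the bookkeeping in the inductive step. Small examples (such as $c=4$, $h=2$) show that the optimal first removal is sometimes a \emph{non}-run chord, whose isolation path happens to traverse the run heads cheaply while leaving a residual complete diagram with the same run length; pinning down a uniform rule for which chord to remove at each step is the delicate part. Once the right choice is made, the induction closes and the arithmetic matches the claimed bound.
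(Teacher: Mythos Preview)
Your decomposition into a base cost $c(c-1)/2$ plus a detour cost is right, and your observation that in a complete Gauss diagram the endpoints of each chord are antipodal (so a run of $h$ consecutive heads forces the corresponding tails into a run of $h$ consecutive positions on the opposite side) is correct and useful. The gap is in the inductive strategy and the arithmetic that follows from it.

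Your primary plan --- remove a chord from the run, allowing $h_j$ to drop to $h-(j-1)$ --- does not yield the claimed detour total. With $k_j=c-(j-1)$ chords and $h_j\ge h-(j-1)$ you only get $k_j-h_j\le c-h$ for each of the first $h$ steps, so those steps together contribute up to $h\lfloor(c-h)/2\rfloor$ detours. Once the run is exhausted you fall back to the generic bound $\lfloor(k-1)/2\rfloor$, adding another $\sum_{k=2}^{c-h}\lfloor(k-1)/2\rfloor=\lfloor(c-h-1)^2/4\rfloor$. The sum $h\lfloor(c-h)/2\rfloor+\lfloor(c-h-1)^2/4\rfloor$ strictly exceeds $\lfloor(c-h)^2/4\rfloor$ as soon as $h\ge 2$, so this route cannot reach the stated bound; nothing ``telescopes'' here.

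The fix is exactly the observation you bury in your last paragraph: always remove a \emph{non}-run chord, specifically one of the two arrows $A,B$ whose \emph{tail} is adjacent to the run of heads. Moving the head of that arrow across the arc containing the run passes all $h$ run-heads at cost~$1$ each; a pigeonhole count on the remaining $c-h-2$ endpoints outside the run (plus the single tail of the other bounding arrow) shows that one of $A,B$ incurs at most $1+\lfloor(c-h-2)/2\rfloor=\lfloor(c-h)/2\rfloor$ detours. Crucially, since the removed arrow's head was \emph{not} in the run, the run still has length $h$ afterward. Hence $h_j\equiv h$ at every step until only $h+1$ arrows remain, at which point (by your antipodal observation) all heads are consecutive and no further detours are needed. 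The detour total is then
\[
\sum_{k=h+2}^{c}\left\lfloor\frac{k-h}{2}\right\rfloor=\sum_{m=2}^{c-h}\left\lfloor\frac{m}{2}\right\rfloor=\left\lfloor\frac{(c-h)^2}{4}\right\rfloor,
\]
as required. In short, the ``uniform rule'' you were searching for is simply: never touch the run itself; always peel off a bounding arrow.
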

\begin{proof}
As in the proof of Lem. \ref{crossingbound}, we will use forbidden moves and forbidden detours to move ends of arrows past each other, and then remove arrows using the first Reidemeister move. The sequence of consecutive undercrossings corresponds to a sequence of consecutive heads in the Gauss diagram for $K$.  Observe that $h \neq c-1$, since if there is a sequence of $c-1$ consecutive heads, the last arrow will extend this to a sequence of length $c$.  So suppose $h \leq c-2$.  Let $A$ and $B$ be the arrows whose tails bound the sequence of $h$ heads. One of $A$ or $B$ will have at least $h + \left\lceil \frac{c-h-2}{2} \right\rceil$ heads on one side, and at most $1+\left\lfloor \frac{c-h-2}{2} \right\rfloor$ tails (the additional 1 comes from the other of chords $A$ and $B$).  So the total number of forbidden moves required to remove that arrow will be:
$$h + \left\lceil \frac{c-h-2}{2} \right\rceil + 2 + 2\left\lfloor \frac{c-h-2}{2} \right\rfloor = h+2 + \left\lfloor \frac{3}{2}(c-h-2) \right\rfloor = (h-1) + \left\lfloor \frac{3}{2}(c-h) \right\rfloor.$$

Since removing either $A$ or $B$ does not decrease the length of the sequence of heads, the resulting knot will still be a complete knot with a maximal sequence of length at least $h$.  So we can continue this process until there are only $h+1$ arrows left, at which point all the heads will be consecutive. After that, no more forbidden detours are needed.  So the total number of forbidden moves required is:
$$\sum_{k=2}^{c-h}{\left(h-1 + \left\lfloor \frac{3k}{2} \right\rfloor\right)} + \sum_{k=1}^h{k} = \frac{c(c-1)}{2}+\left\lfloor\frac{(c-h)^2}{4}\right\rfloor.$$
\end{proof}

Table \ref{completebounds} lists the upper bound given by Lem. \ref{completebound} for small values of $c$ and $h$. Note that when $h = 1$, the bounds are those given by Lem. \ref{crossingbound}.

\begin{table}[htbp]
\begin{center}
\begin{tabular}{cccccccccccc}
& & \multicolumn{10}{c}{crossing number $c$} \\ 
& & 1 & 2 & 3 & 4 & 5 & 6 & 7 & 8 & 9 & 10 \\ \cline{3-12}
\multicolumn{1}{c}{\multirow{10}{*}{$h$}} & \multicolumn{1}{c|}{1} & 0 & 1 & 4 & 8 & 14 & 21 & 30 & 40 & 52 & 65 \\
\multicolumn{1}{c}{} & \multicolumn{1}{c|}{2} & & 1 & 3 & 7 & 12 & 19 & 27 & 37 & 48 & 61 \\
\multicolumn{1}{c}{} & \multicolumn{1}{c|}{3} & & & 3 & 6 & 11 & 17 & 25 & 34 & 45 & 57 \\
\multicolumn{1}{c}{} & \multicolumn{1}{c|}{4} & & & & 6 & 10 & 16 & 23 & 32 & 42 & 54 \\
\multicolumn{1}{c}{} & \multicolumn{1}{c|}{5} & & & & & 10 & 15 & 22 & 30 & 40 & 51 \\
\multicolumn{1}{c}{} & \multicolumn{1}{c|}{6} & & & & & & 15 & 21 & 29 & 38 & 39 \\
\multicolumn{1}{c}{} & \multicolumn{1}{c|}{7} & & & & & & & 21 & 28 & 37 & 47 \\
\multicolumn{1}{c}{} & \multicolumn{1}{c|}{8} & & & & & & & & 28 & 36 & 46 \\
\multicolumn{1}{c}{} & \multicolumn{1}{c|}{9} & & & & & & & & & 36 & 45 \\
\multicolumn{1}{c}{} & \multicolumn{1}{c|}{10} & & & & & & & & & & 45 \\
\end{tabular}
\end{center}
\caption{Upper bounds on $F(K)$ for complete knots with $c$ crossings, and at least $h$ consecutive undercrossings.}
\label{completebounds}
\end{table}

\begin{thm}\label{upperbound}
For any virtual knot $K$ with crossing number $c$, $F(K) \leq \left\lfloor \frac{3c^2-6c+7}{4} \right\rfloor$.
\end{thm}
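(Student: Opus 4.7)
My plan is to prove this bound by induction on the crossing number $c$, splitting the inductive step on the parity of $c$. The base cases $c \le 3$ are immediate, since Lemma~\ref{crossingbound}'s bound $\frac{c(c-1)}{2} + \lfloor (c-1)^2/4 \rfloor$ already equals $\lfloor (3c^2-6c+7)/4 \rfloor$ for $c = 2, 3$ (and is $0$ for $c = 1$).

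For $c$ odd, Lemma~\ref{crossingbound}'s isolation argument suffices directly: some chord can be isolated and removed in at most $3(c-1)/2$ forbidden moves, leaving a knot with crossing number at most $c - 1$. Applying the inductive hypothesis to the reduced knot and summing (easiest to verify by writing $c = 2k+1$) yields exactly $\lfloor (3c^2 - 6c + 7)/4 \rfloor$, so the odd case requires no improvement beyond Lemma~\ref{crossingbound}.

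For $c$ even, Lemma~\ref{crossingbound}'s first-step estimate $(3c-4)/2$ overshoots by one, so I split into two subcases. \emph{Case A:} Some chord has its shorter arc containing at most $c - 2$ endpoints of other chords. Then rerunning Lemma~\ref{crossingbound}'s analysis on that chord gives a removal cost of at most $(c-2) + \lfloor(c-2)/2\rfloor = 3(c-2)/2$, and the inductive hypothesis on the reduced knot closes out the bound. \emph{Case B:} Every chord has both arcs of equal size $c - 1$, i.e., every chord is \emph{antipodal} (its endpoints are $c$ apart in the cyclic order of the $2c$ circle positions). A short arc-counting argument then shows the chord diagram must be the complete chord diagram (the unique pairing where each position $j$ is joined to $j + c$), and that all such pairs of chords interleave, so $K$ is a complete knot. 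Since $c$ is even, the two endpoints of each chord sit at positions of the same parity, which rules out the alternating H/T pattern around the circle and forces the longest consecutive-undercrossing length $h$ to be at least $2$. Lemma~\ref{completebound} with $h \ge 2$ then yields $\frac{c(c-1)}{2} + (c-2)^2/4$, which equals $\lfloor (3c^2-6c+7)/4 \rfloor$ for $c$ even.

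The main obstacle I anticipate is Case B: verifying both that the ``all chords antipodal'' condition forces the chord diagram to be the complete chord diagram (hence $K$ is a complete knot), and that the parity argument for $c$ even rules out the alternating H/T pattern (yielding $h \ge 2$), so Lemma~\ref{completebound} supplies exactly the required bound.
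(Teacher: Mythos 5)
Your proposal is correct and takes essentially the same route as the paper: your antipodal/non-antipodal dichotomy for even $c$ is exactly the paper's complete/non-complete split (a chord has both arcs of size $c-1$ precisely when every other chord crosses it), your positional parity argument is the paper's parity argument forcing two consecutive undercrossings ($h \ge 2$) in an even complete diagram, and the removal costs come from Lem.~\ref{crossingbound} and Lem.~\ref{completebound} in the same way, with the odd case handled by one crude chord removal followed by the even bound. The only difference is organizational: you package the argument as an explicit induction on the crossing number, which if anything makes more rigorous the paper's informal closing comparison that removing an arrow from a non-complete diagram costs at most as much as the corresponding step for a complete one.
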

\begin{proof}
We first consider the case when $K$ is a complete knot. If $c$ is even, then there must be two consecutive undercrossings in the diagram for $K$. Otherwise, the diagram would alternate between overcrossings and undercrossings. If we begin at a crossing $A$, this means that we need to go over an even number of crossings before returning to $A$ (otherwise we would try to go over or under $A$ twice).  But since the knot is complete and $c$ is even, the diagram passes through an odd number of crossings before returning to $A$, which is impossible.

So, if $K$ is a complete knot and $c$ is even, then by Lem. \ref{completebound} we have 
$$F(K) \leq  \frac{c(c-1)}{2}+\left\lfloor\frac{(c-2)^2}{4}\right\rfloor = \frac{c(c-1)}{2}+\frac{(c-2)^2}{4} = \frac{3c^2-6c+4}{4} = \left\lfloor \frac{3c^2-6c+7}{4} \right\rfloor.$$ 
On the other hand, if $c$ is odd, then removing the first arrow from the Gauss diagram requires at most $\frac{3}{2}(c-1)$ forbidden moves, as in Lem. \ref{crossingbound} (since $c-1$ is even, the floor function is unnecessary). The result is a complete knot with $c-1$ crossings, which must then have two consecutive undercrossings as above.  So in this case we have 
$$F(K) \leq \frac{3}{2}(c-1)+\frac{(c-1)(c-2)}{2}+\left\lfloor\frac{(c-3)^2}{4}\right\rfloor = \frac{3}{2}(c-1)+\frac{(c-1)(c-2)}{2}+\frac{(c-3)^2}{4} = \frac{3c^2-6c+7}{4}.$$

If $K$ is {\it not} complete, then there are two arrows in the Gauss diagram which do not intersect. Then at least one of them has at most $c-2$ heads and tails on one side, so can be removed with at most $\left\lfloor \frac{3}{2}(c-2) \right\rfloor$ forbidden moves.  On the other hand, from the proof of Lem. \ref{completebound}, removing an arrow from a complete knot with two consecutive undercrossings requires at most $1 + \left\lfloor \frac{3}{2}(c-2) \right\rfloor$ forbidden moves. So the bound computed for the complete knots also bounds the forbidden number for any non-complete knots.
\end{proof}

Below is a table listing the upper bound provided by Thm. \ref{upperbound} for crossing numbers up to 12.

\mathstrut\begin{center}
\begin{tabular}{| l | c  c  c  c  c  c  c  c  c  c  c |}
\hline
Crossing number & 2 & 3 & 4 & 5 & 6 & 7 & 8 & 9 & 10 & 11 & 12 \\ \hline
Upper bound for $F(K)$ & 1 & 4 & 7 & 13 & 19 & 28 & 37 & 49 & 61 & 76 & 91 \\
\hline
\end{tabular}
\end{center}\mathstrut

\begin{example}\label{Ex:torus} The bounds in Lem. \ref{completebound} and Thm. \ref{upperbound} are far from sharp. In particular, as arrows are removed from a complete knot, the sequence of consecutive undercrossings is likely to increase in length. To illustrate this, we will consider the $(p,2)$-torus knots $T_{p,2}$ (where $p$ is odd), which are the only classical complete knots. The Gauss diagrams for these knots consist of $p$ arrows, all with positive sign, with heads and tails alternating around the diagram. If we follow the procedure in the proof of Lem. \ref{completebound}, then each time we remove an arrow we increase the length of the longest sequence of undercrossings by 1, as shown for the $(5,2)$-torus knot in Fig. \ref{F:5torus}.

\begin{figure}[htbp]
\begin{center}
\scalebox{1}{\includegraphics{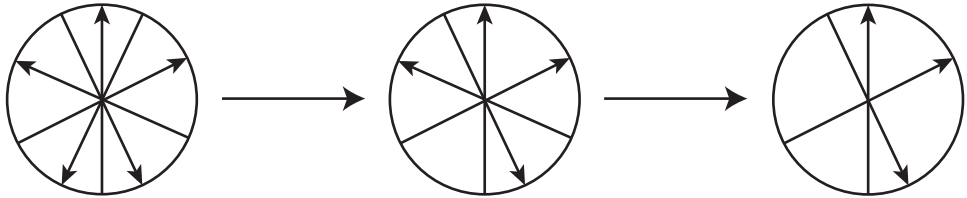}}
\end{center}
\caption{Removing arrows from the $(5,2)$-torus knot.}
\label{F:5torus}
\end{figure}

When there are $\frac{p+1}{2}$ arrows left, all the remaining heads are consecutive.  So:
$$F(T_{p,2}) \leq \sum_{k=1}^{\frac{p-1}{2}}{\left(k-1 + \frac{3}{2}(p-2k+1)\right)} + \sum_{k=1}^{\frac{p-1}{2}}{k} = \frac{5p^2 - 4n-1}{8}$$
For example, $F(T_{11,2}) \leq 70$, compared to the bound of 76 from Thm. \ref{upperbound}.
\end{example}



\section{Examples}\label{examples}

In this section we will examine several common families of knots: the Kishino knot with twists added, twist knots and torus knots.  For the Kishino knots, we prove the forbidden number for all such knots is 1; for the other families, we will provide upper bounds for the forbidden number.  In these examples, the upper bounds are found through direct analysis of the Gauss diagrams, though in the first example, the corresponding moves in the knot diagram are easy to see.


\begin{figure}[htbp]
\begin{center}
\includegraphics{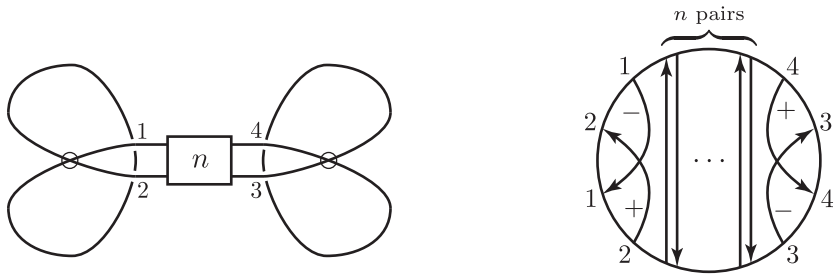}
\end{center}
\caption{Kishino Knot with $n$ full twists.}
\label{Kishino-n}
\end{figure}

\subsection*{Kishino knot with $n$ full twists.}  Fig. ~\ref{Kishino-n} shows the Kishino knot with $n$ full twists, denoted $K_n$ (the original Kishino knot is the case when $n = 0$).  For all $n\ge0$, we have $F(K_n) = 1$. The knots $K_n$ are distinguished from each other (and from the unknot) by the arrow polynomial \cite{Henrichpseudo}. Thus $F(K_n)\ge1$. 

To show that $F(K_n)=1$ we perform the following moves:  We begin by using forbidden move $FU$ to slide the head of arrow 1 past the head of arrow 2. Next we remove crossings 1 and 2 with $R1$ moves. Then the $2n$ crossings in the middle can be eliminated with $R1$ moves. Finally we use an $R2$ move to eliminate crossings 3 and 4. Note that the collection $\{K_n\}$ provides an infinite family of distinct knots with forbidden number one.

\subsection*{\bf Twist Knots.} The twist knot $T_n$ with $n$ half-twists is shown in Fig. ~\ref{twistknot}, along with the Gauss diagram for the two cases: when $n$ is even and when $n$ is odd.  All twist knots have unknotting number one and virtual unknotting number two \cite{fm}, but their forbidden number most likely depends on $n$.  If $n$ is odd then $F(T_n) \leq 3n + 1$.  This is achieved by using three forbidden moves to move one end of each of the horizontal chords past the vertical chords, and then removing the horizontal chord by an $R1$ move; a final forbidden move is then used to uncross the two vertical chords.   

If $n$ is even, then $F(T_n) \leq \frac{5}{2}n - 1$.  In this case, we move the tail of the right-hand vertical chord past all the horizontal chords except the top one (requiring $(n-1) + \frac{n}{2} = \frac{3}{2}n - 1$ forbidden moves), move the head of the right-hand vertical chord past the tail of the other vertical chord (requiring two forbidden moves), and then remove the (originally) right-hand vertical chord and the top horizontal chord by an $R2$ move.  Now we can remove all but one of the remaining horizontal chords, one at a time, by combining a forbidden move and an $R1$ move; the last two chords are removed by another $R2$ move.  This is a total of $\frac{3}{2}n - 1 + 2 + (n-2) = \frac{5}{2}n-1$ forbidden moves.  In particular, the figure-eight knot is $T_2$, so this gives us an upper bound of four for the forbidden number of the figure-eight knot.

\begin{figure}[htbp]
\begin{center}
\scalebox{1}{\includegraphics{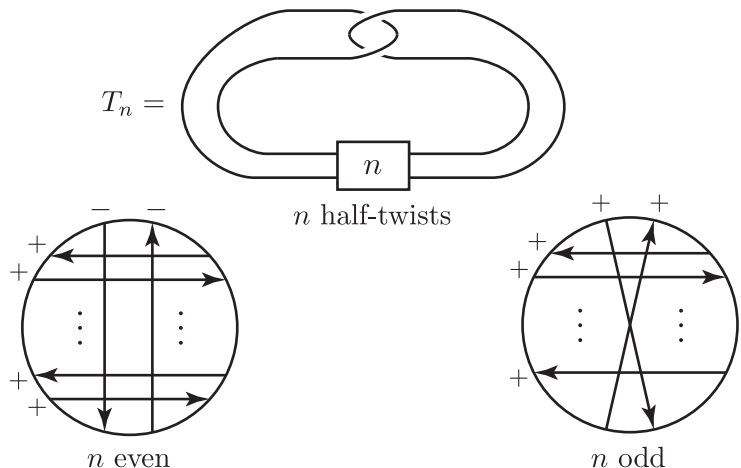}}
\end{center}
\caption{Twist knot $T_n$.}
\label{twistknot}
\end{figure}

\subsection*{Torus Knots} \label{torusknots}

In Ex. ~\ref{Ex:torus}, we found an upper bound for the forbidden number of the $(p,2)$-torus knot ($p$ odd). That bound was created from a diagram with a minimal number of crossings. But a better bound can be obtained using a diagram with \emph{more} crossings. The $(p,2)$-torus knot is a $2$-bridge knot, meaning it has a diagram with only two overpasses (segments without undercrossings) and two underpasses (segments without overcrossings) appearing alternately along the diagram. Figure \ref{torus52} is a $2$-bridge presentation for the $(5,2)$-torus knot with the corresponding Gauss diagram.
\begin{figure}[htbp]
\begin{center}
$$\includegraphics{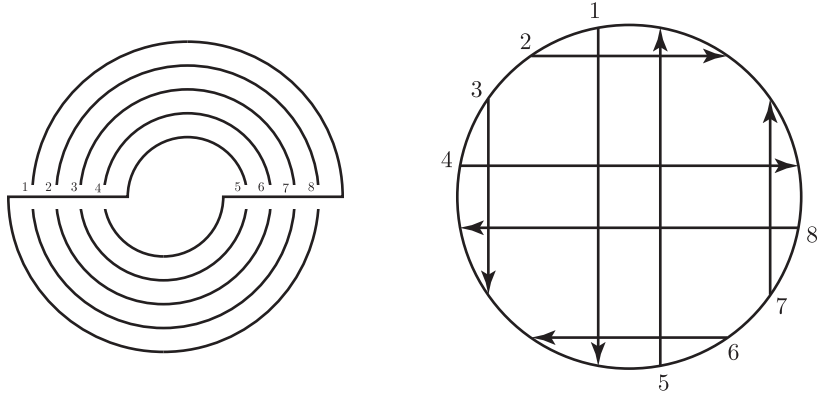}$$
\end{center}
\caption{$(5,2)$-torus knot and Gauss diagram.}
\label{torus52}
\end{figure}

Recall that forbidden moves move heads past adjacent heads or tails past adjacent tails. Thus crossing 2 can be eliminated from the diagram with two forbidden moves (one $FO$ and one $FU$) followed by an $R1$ move, and similarly for crossing 6. Crossings 4 and 8 can then be eliminated with four forbidden moves each (two $FO$ and two $FU$) and $R1$ moves. The remaining crossings (1, 3, 5, 7) are eliminated with $R1$ moves. The total number of forbidden moves is $2(2+4)=12$. Thus $F(T_{5,2})\le 12$.  In contrast, the formula in Ex. \ref{Ex:torus} gives an upper bound of 13.

The situation is similar for $T_{p,2}$. The $2$-bridge presentation will have $2(p-1)$ crossings, with one overpass numbered from $1$ to $p-1$, and the other from $p$ to $2(p-1)$. The Gauss diagram will have the even chords horizontal and the odd chords vertical. The even chords from $2$ to $p-1$ can be eliminated sequentially with $2$ to $p-1$ forbidden moves respectively, followed by $R1$ moves. Symmetrically, the remaining chords can be eliminated with the same number of forbidden moves. The total number of forbidden moves used is $2\big(2+4+\cdots + (p-1)\big) = \frac{p^2-1}{2}$. Thus we have proven the following: 

\begin{thm} \label{torusbound}
$F(T_{p,2})\le\frac{p^2-1}{2}$.
\end{thm}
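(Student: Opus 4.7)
The plan is to follow the recipe already indicated in the text: present the $(p,2)$-torus knot in its $2$-bridge form with $2(p-1)$ classical crossings, read off the resulting Gauss diagram, and then count the forbidden moves needed to strip the chords off one at a time in a carefully chosen order.

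First I would set up the Gauss diagram explicitly for general odd $p$. Label the crossings $1,2,\ldots,2(p-1)$ in the order they are encountered when traversing the knot, so that one overpass contains labels $1,\ldots,p-1$ and the other contains labels $p,\ldots,2(p-1)$. The observation to verify (by induction on $p$, or directly from the standard $2$-bridge picture generalising Fig.~\ref{torus52}) is that in the resulting Gauss diagram the even-labelled chords lie ``horizontally'' while the odd-labelled chords lie ``vertically,'' with the even chord $2k$ having exactly $2k-1$ chord-endpoints separating its head from its tail on the shorter side.

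Next I would eliminate the chords in the order $2,4,\ldots,p-1$ from one bridge. For chord $2k$, the $2k-1$ endpoints between its head and tail split into $k$ heads and $k-1$ tails (or vice versa), so by using $k$ forbidden $FO$ moves and $k$ forbidden $FU$ moves one can bring the head and tail of chord $2k$ adjacent to each other; a single $R1$ move then deletes that chord. The crucial verification here is that performing these forbidden moves on chord $2k$ does not disturb the relative configuration of the still-unprocessed chords $2(k+1),\ldots,p-1$, so the count $2k$ genuinely applies at each stage. Once all chords $2,\ldots,p-1$ from the first bridge are gone, the remaining odd chords $1,3,\ldots,p-2$ from that bridge are nugatory and are removed by $R1$ moves alone.

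By the symmetry of the $2$-bridge presentation, the identical procedure applied to the second bridge removes chords $p+1,p+3,\ldots,2(p-1)-1$ (after rounding appropriately) using the same counts $2,4,\ldots,p-1$ of forbidden moves. Summing the forbidden-move counts gives
\[
2\bigl(2+4+\cdots+(p-1)\bigr)=2\cdot\frac{(p-1)(p+1)}{4}=\frac{p^2-1}{2},
\]
which is the asserted upper bound. The main obstacle is purely bookkeeping: confirming the claim about the cyclic arrangement of heads and tails around the Gauss-diagram circle, and checking that the forbidden-move sequence used for chord $2k$ leaves the arrangement of the surviving higher-indexed even chords intact so that the per-chord cost is indeed $2k$ rather than something larger.
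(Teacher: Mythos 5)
Your proposal follows essentially the same route as the paper's proof: the $2$-bridge presentation of $T_{p,2}$ with $2(p-1)$ crossings, sequential elimination of the even chords of one bridge at a cost of $2k$ forbidden moves for chord $2k$ (split evenly between $FO$ and $FU$ moves), removal of the odd chords by $R1$ moves alone, symmetry between the two bridges, and the sum $2\bigl(2+4+\cdots+(p-1)\bigr)=\frac{p^2-1}{2}$. The only slip is a minor bookkeeping inconsistency you yourself flag: you claim $2k-1$ endpoints separate the head and tail of chord $2k$ but then use $2k$ forbidden moves to clear them; in fact (as the paper's $T_{5,2}$ example shows, with crossing $2k$ costing $k$ $FO$ and $k$ $FU$ moves) there are $k$ heads and $k$ tails in between, so your move count and final bound are correct as stated.
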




Note that the better bound for the forbidden number is obtained from a diagram that does not have a minimal number of crossings: $2(p-1)$ crossings in the 2-bridge presentation as compared to $p$ crossings in the minimal presentation.


\section{Forbidden number and Odd writhe}\label{ow}

We recall that a crossing of a virtual knot is called {\em odd} if its crossing labels in the Gauss diagram have an odd number of crossing labels between them. The {\em odd writhe}, then, is the sum of the signs of the odd crossings.  Kauffman showed that the odd writhe is invariant under the extended Reidemeister moves, and hence is an invariant of virtual knots \cite{KaSelfLink}.  We note that in classical knots the odd writhe is always zero, since all crossings of classical knots are even.  For the virtual trefoil shown in Fig. ~\ref{Gauss1}, the odd writhe is two, proving that this knot is not equivalent to any classical knot.

While the odd writhe is invariant under the extended Reidemeister moves, it is not invariant under the forbidden moves.

\begin{thm} \label{oddwrithe}
A forbidden move changes the odd writhe by 0, 2 or -2.
\end{thm}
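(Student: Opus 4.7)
The plan is to analyze directly the effect of a single forbidden move on the Gauss diagram and track parity changes chord by chord. Recall from Fig.~\ref{FGauss} that an $FO$ move swaps the tails of two arrows $A,B$ whose tails sit at adjacent positions on the circle, while an $FU$ move swaps two adjacent heads; in either case the signs $\epsilon_A,\epsilon_B\in\{\pm1\}$ of the two affected crossings are preserved, since the move does not modify the local structure of any classical crossing. I will establish (i)~the parity of every chord $C\neq A,B$ is unchanged, and (ii)~the parities of both $A$ and $B$ are flipped. Granting these, the odd writhe changes by $\pm\epsilon_A\pm\epsilon_B$, which always lies in $\{-2,0,2\}$.

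For (i), I would label the $2n$ endpoints as positions $1,\ldots,2n$ around the circle, and suppose the transposition exchanges the endpoints at positions $i$ and $i+1$. Since neither endpoint of $C$ occupies position $i$ or $i+1$, and the two swapped positions sit together on the same side of $C$, the multiset of endpoints strictly between the two endpoints of $C$ on either arc is unchanged. Hence the parity of $C$ is unaltered.

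For (ii), suppose $t_A$ was at position $i$ and $t_B$ at position $i+1$. After the swap the tail of $A$ sits at position $i+1$, so on whichever arc from $h_A$ previously terminated at $t_A$, the new arc now additionally traverses the endpoint at position $i$ (which is now $t_B$). The count of intermediate endpoints on that arc therefore changes by exactly one, so the parity of $A$ flips; the symmetric argument handles $B$.

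Combining (i) and (ii) with the preservation of the signs, only $A$ and $B$ contribute to the change in the odd writhe. Each contributes $+\epsilon$ if the chord was even before and $-\epsilon$ if it was odd (with $\epsilon$ its sign), so the total change is a $\pm1,\pm1$ signed sum of $\epsilon_A$ and $\epsilon_B$ and therefore lies in $\{-2,0,2\}$. The argument is almost entirely bookkeeping; the only real point to verify carefully is the clean claim that a single adjacent transposition on the Gauss circle flips the parity of precisely the two chords whose endpoints were swapped, and no others.
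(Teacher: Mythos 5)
Your proof is correct and takes essentially the same route as the paper's: both rest on the observation that a forbidden move preserves all crossing signs and flips the parities of exactly the two arrows involved (the paper phrases this as the move crossing or uncrossing those two arrows), after which the change in the odd writhe is a signed sum $\pm\epsilon_A\pm\epsilon_B\in\{-2,0,2\}$. Your adjacent-transposition bookkeeping simply makes explicit the step the paper leaves implicit, namely that no chord other than the two involved changes parity.
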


\begin{proof}
A forbidden move either crosses or uncrosses two arrows of the Gauss diagram, causing their parities to change.  If they have the same parity and the same sign or opposite parities and opposite signs, the odd writhe will change by $\pm 2$; if they have the same parity and opposite signs or opposite parities and the same sign, the odd writhe will be unchanged.
\end{proof}

This result immediately gives us the following lower bound on the forbidden number.

\begin{thm} \label{lowerbound}
For any virtual knot $K$, $F(K) \geq \left\vert \frac{1}{2}\oddw(K)\right\vert$, where $\oddw(K)$ is the odd writhe of $K$.
\end{thm}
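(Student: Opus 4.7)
The plan is to combine Theorem~\ref{oddwrithe} with the fact that the odd writhe is unchanged by the extended Reidemeister moves, using a straightforward counting/triangle-inequality argument.

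First I would note that the unknot $U$ has odd writhe $\oddw(U) = 0$: since the unknot admits a diagram with no crossings, its Gauss diagram is empty, so the sum defining $\oddw$ is vacuous. (Equivalently, as remarked in the paragraph preceding Theorem~\ref{oddwrithe}, any classical knot has odd writhe zero because every chord in a realizable Gauss diagram has an even number of endpoints on each side.)

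Next, suppose $F(K) = n$, so there exists a sequence of extended Reidemeister moves and exactly $n$ forbidden moves transforming $K$ into the unknot. Let $K = K_0, K_1, \dots, K_n = U$ be the sequence of virtual knots obtained by recording the diagram immediately after each forbidden move (while lumping all intervening extended Reidemeister moves together with the next forbidden step). Since $\oddw$ is invariant under the extended Reidemeister moves, the difference $\oddw(K_i) - \oddw(K_{i-1})$ is precisely the change induced by a single forbidden move, which by Theorem~\ref{oddwrithe} lies in $\{-2, 0, 2\}$. In particular, $|\oddw(K_i) - \oddw(K_{i-1})| \leq 2$ for each $i$.

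By the triangle inequality,
\[
|\oddw(K)| = |\oddw(K_0) - \oddw(K_n)| \leq \sum_{i=1}^{n} |\oddw(K_i) - \oddw(K_{i-1})| \leq 2n,
\]
so $n \geq \tfrac{1}{2}|\oddw(K)|$, which gives the bound $F(K) \geq |\tfrac{1}{2}\oddw(K)|$. There is no real obstacle here; the only subtlety is the bookkeeping to make sure the intermediate extended-Reidemeister moves really do preserve $\oddw$, which is guaranteed by Kauffman's result cited at the start of this section.
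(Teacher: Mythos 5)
Your argument is correct and is exactly the reasoning the paper intends: the paper states Theorem~\ref{lowerbound} as an immediate consequence of Theorem~\ref{oddwrithe}, and your proposal simply fills in the standard bookkeeping (odd writhe invariance under extended Reidemeister moves, $\oddw$ of the unknot being zero, and the triangle inequality over the $n$ forbidden moves). No gaps; this matches the paper's approach, just written out in full.
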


We remark that this tells us nothing about the forbidden number of classical knots since they have odd writhe of zero. 

As an application, we can compute the forbidden number for a ring of virtual trefoil knots.  In particular, the forbidden number of the virtual trefoil is one.

\begin{thm} The forbidden number of a ring of $n$ virtual trefoil knots is $n$. 
\end{thm}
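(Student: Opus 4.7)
The plan is to show $F(R_n) \ge n$ using the odd writhe bound and $F(R_n) \le n$ using an explicit unknotting sequence, where $R_n$ denotes the ring of $n$ virtual trefoils. Since a single virtual trefoil has $\oddw = 2$ (Fig.~\ref{Gauss1}) and forbidden number $1$ (as noted just before the statement), each trefoil in the ring should contribute $+2$ to the odd writhe and should be removable by exactly one forbidden move, which is why the two estimates will meet.

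For the lower bound, I would first describe the Gauss diagram of $R_n$ explicitly: concatenating the two--chord Gauss diagrams of $n$ virtual trefoils around a single circle produces an arrangement in which each trefoil's four endpoints sit consecutively in the crossing pattern $h_a, h_b, t_a, t_b$, with the other $4(n-1)$ endpoints lying on the complementary arc. For a chord $a$ in some block, the short arc from $h_a$ to $t_a$ contains exactly one other endpoint (one endpoint of its partner $b$), so $a$ is odd; the same holds for every chord in every block. Since each chord has sign $+1$, we get $\oddw(R_n) = 2n$, and Theorem~\ref{lowerbound} yields $F(R_n) \ge n$.

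For the upper bound I would induct on $n$. Given the adjacency pattern above, a single $FU$ move applied to the adjacent heads $h_a, h_b$ of the first block swaps them, converting that block's chord pattern from crossing to uncrossed; in particular, $h_a$ becomes adjacent to $t_a$, and after an $R1$ move removing chord $a$, the remaining $h_b$ and $t_b$ become adjacent as well, so a second $R1$ move removes chord $b$. The result is a ring of $n-1$ trefoils, since no endpoints of other trefoils were disturbed. Iterating $n$ times unknots $R_n$ using exactly $n$ forbidden moves, giving $F(R_n) \le n$.

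The main obstacle is essentially bookkeeping: pinning down the Gauss diagram of $R_n$ precisely enough to confirm that each trefoil's four endpoints remain contiguous on the circle in the expected alternating pattern and that no other chord endpoints interfere with either the parity count or the local unknotting step. Once that is verified, the matching inequalities give $F(R_n) = n$.
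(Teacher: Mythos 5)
Your proposal is correct and takes essentially the same route as the paper: the lower bound via the odd writhe (all $2n$ crossings are odd with the same sign, so $\left\vert\oddw\right\vert = 2n$ and Thm.~\ref{lowerbound} gives $F \geq n$), and the upper bound by uncrossing each trefoil's pair of chords with a single forbidden move and then removing them with $R1$ moves. Your extra bookkeeping (the explicit block pattern $h_a, h_b, t_a, t_b$ and the induction) merely spells out what the paper reads directly off the Gauss diagram in Fig.~\ref{TrefoilRing}.
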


\begin{proof}
The Gauss diagram for a ring of $n$ virtual trefoil knots is shown in Fig. ~\ref{TrefoilRing}.
\begin{figure}[htbp]
\begin{center}
\scalebox{.5}{\includegraphics{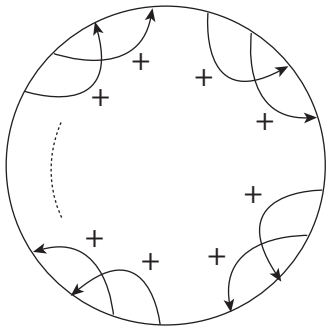}}
\end{center}
\caption{A ring of $n$ virtual trefoil knots.}
\label{TrefoilRing}
\end{figure}
Each pair of crossed arrows can be uncrossed by a single forbidden move, and the crossings can then all be removed using the first Reidemeister move, so the forbidden number is at most $n$.

On the other hand, each of the $2n$ crossings is odd, and they all have the same sign, so the odd writhe is $2n$.  So, by Thm. ~\ref{lowerbound}, the forbidden number is at least $n$.  Therefore, the forbidden number is exactly $n$.
\end{proof}


\begin{example} {\bf Virtual twist knots.}  The {\em virtual twist knots $VT_n$} are the twist knots $T_n$ with the top crossing made virtual, as shown in Fig. ~\ref{virtualtwistknot}.  If $n$ is odd, then the odd writhe is $n + 1$ so $F(VT_n) \geq \frac{n+1}{2}$; on the other hand, we can remove each horizontal chord in turn with a single forbidden move and an R1 move, so $F(VT_n) \leq n$.  

If $n$ is even then the odd writhe is $n$ and $F(VT_n) \geq \frac{n}{2}$.  We remove the horizontal chords as when $n$ is odd, except that the last horizontal chord and the vertical chord can be removed together by an $R2$ move.  So when $n$ is even $F(VT_n) \leq n-1$.
\end{example}

\begin{figure}[htbp]
\begin{center}
\scalebox{1}{\includegraphics{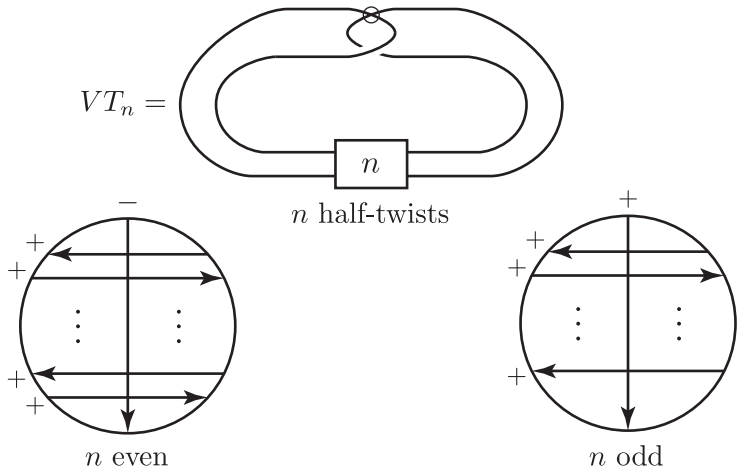}}
\end{center}
\caption{Virtual twist knot $VT_n$.}
\label{virtualtwistknot}
\end{figure}

\subsection*{Forbidden Number and the Odd Writhe Polynomial}

Cheng \cite{Cheng} extended the idea of the odd writhe, creating the \emph{odd writhe polynomial}, denoted $\OWP \in \Z[t, t^{-1}]$. This polynomial has been completely characterized and generalizes the odd writhe invariant; the sum of the coefficients of \OWP\ is precisely the odd writhe.  We will briefly review the definition of the odd writhe polynomial, and then describe how it changes under forbidden moves.

Consider a virtual knot diagram $K$ with real crossings $c_1, \dots, c_n$.  The Gauss diagram for $K$ contains $n$ arrows corresponding to these crossings, which divide the circle into $2n$ arcs.  We assign each arc an integer as follows:  beginning at a point in the arc, we follow the orientation of the circle (traditionally counter-clockwise) around the Gauss diagram.  For each arrow in the diagram, we will first encounter either its {\em head}  (corresponding to an undercrossing) or its {\em tail} (corresponding to an overcrossing).  The integer assigned to the arc is the sum of the signs of the arrows whose {\em heads} are encountered first.

Next we use the integers on the arcs to assign an integer to each arrow $c_i$.  The endpoints of $c_i$ are incident to four arcs of the Gauss circle; two arcs at the head and two arcs at the tail.  We denote the labels of these arcs by $h_1$ and $h_2$ at the head and $t_1$ and $t_2$ at the tail.  Then the integer assigned to $c_i$ is $N(c_i) = \max\{h_1, h_2\} - \min\{t_1,t_2\}$.  Finally, we define the {\em odd writhe polynomial} of $K$ to be:
$$\OWP = \sum_{c_i\ odd}{sign(c_i)t^{N(c_i)}}$$
\noindent where the sum is over all odd crossings in the knot diagram.  Note that classical knots, which have no odd crossings, have $\OWP = 0$.

\begin{example} We compute the odd writhe polynomial for the virtual knot $K$ in Fig. ~\ref{knot447} (knot 4.47 from \cite{Green}).

\begin{figure}[htbp]
\begin{center}
{\includegraphics{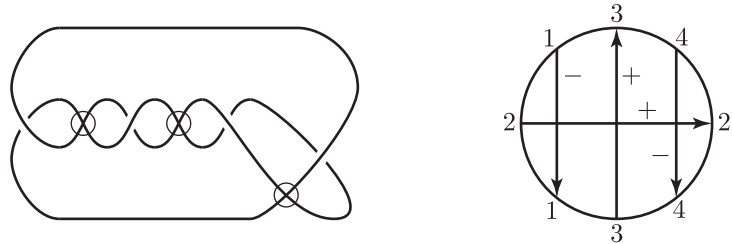}}
\end{center}
\caption{Virtual knot 4.47.}
\label{knot447}
\end{figure}
The labels for the arcs of the Gauss diagram are shown in Fig. ~\ref{knot447b}, along with the computation of $N(c_i)$.  Since all of the crossings are odd, the odd writhe polynomial is:
$$\OWP = -t^2 + t^4 + t^0 - t^2 = t^4 - 2t^2 + 1$$

\begin{figure}[htbp]
{\includegraphics{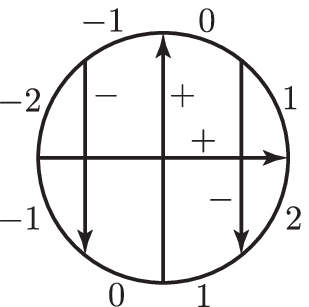}} \parbox[b]{1in}{\begin{align*} 
N(c_1) &= 0 - (-2) = 2 \\ 
N(c_2) &= 2 - (-2) = 4 \\ 
N(c_3) &= 0 - 0 = 0 \\
N(c_4) &= 2 - 0 = 2
\end{align*}\vspace{.25 ex}} 
\caption{Labeling the arcs and arrows of the Gauss diagram.}
\label{knot447b}
\end{figure}

\end{example}

We can use the odd writhe polynomial to establish lower bounds on the forbidden number of many virtual knots, by seeing how it is changed by a forbidden move.

\begin{thm} \label{T:OWP}
Performing a forbidden move on a knot $K$ changes \OWP\ by $\pm t^m \pm t^n$ for some $m$ and $n$. (It is possible that $m = n$.)
\end{thm}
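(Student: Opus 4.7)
The plan is to keep track of the two ingredients of $W_K(t)$, namely the parities of the crossings and the integers $N(c)$, as we perform a single forbidden move. Label the two arrows whose endpoints are exchanged by $c_i$ and $c_j$; for $FO$ the swap is of their adjacent tails, and for $FU$ of their adjacent heads. The parity analysis already used in the proof of Thm.~\ref{oddwrithe} shows that the swap flips the parities of $c_i$ and $c_j$ and leaves the parity (and sign) of every other arrow unchanged.

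Next I would verify that among the $2n$ arcs of the Gauss diagram, only the sliver arc $B$ between the two swapped endpoints changes its integer label. The label of an arc records, for each arrow, whether its head is encountered before its tail as one travels counterclockwise. Interchanging two adjacent endpoints of the same type moves the position of $t_i$ (resp.\ $h_i$) past the position of $t_j$ (resp.\ $h_j$) by exactly one slot, and the only endpoint lying between the old and new positions is $t_j$ (resp.\ $h_j$) itself, which is not an endpoint of any other arrow and is not a head of $c_i$ or $c_j$. So for any arc $\alpha\ne B$ the relative order of the head and tail of each arrow is unchanged, and only $B$'s label is altered (by $0$ or $\pm 2$). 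Since $B$ is adjacent only to the tails (resp.\ heads) of $c_i$ and $c_j$, none of the four arc labels that define $N(c_k)$ for $k\ne i,j$ is altered. Combined with the unchanged parity of such $c_k$, this shows $c_k$ contributes identically to $W_K(t)$ before and after the move.

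All of the change in $W_K(t)$ therefore comes from $c_i$ and $c_j$. Because each of them flips parity, each contributes to the sum $\sum_{c\text{ odd}}\mathrm{sign}(c)t^{N(c)}$ in exactly one of the two states. The net contribution of $c_i$ to the change is thus a single monomial $\pm t^m$ (either $+\mathrm{sign}(c_i)t^{N_{\mathrm{after}}(c_i)}$ if $c_i$ became odd, or $-\mathrm{sign}(c_i)t^{N_{\mathrm{before}}(c_i)}$ if $c_i$ became even), and similarly $c_j$ contributes $\pm t^n$; the total change in $W_K(t)$ is $\pm t^m\pm t^n$, as claimed. The main technical obstacle is the careful local analysis establishing that only the sliver arc $B$ has its label altered; once that is pinned down, localizing the effect of the move to $c_i$ and $c_j$ and the monomial-counting conclusion both follow quickly.
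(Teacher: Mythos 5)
Your proof is correct, and it reaches the conclusion by a genuinely slicker route than the paper. Both arguments share the same local analysis: the forbidden move swaps two adjacent same-type endpoints, so only the sliver arc between them changes its label, the two involved crossings flip parity, and every other crossing retains both its parity and its $N$-value (your observation that the sliver arc is adjacent only to endpoints of $c_i$ and $c_j$, so it cannot enter $N(c_k)$ for $k\neq i,j$, is exactly the point the paper needs as well). Where you diverge is in the concluding step. The paper computes the new values $N(c'_\e)$ and $N(c'_\de)$ explicitly in terms of the arc labels and crossing signs, separately for $FU$ and $FO$, and then tabulates $W_K(t)-W_{K'}(t)$ in four parity cases for each move type. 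You avoid all of this with a single counting observation: since each of $c_i$ and $c_j$ flips parity, each is odd in exactly one of $K$, $K'$, hence contributes exactly one monomial to the difference, giving $\pm t^m \pm t^n$ with no case analysis and no exponent computation at all. Your version is shorter, uniform across the two move types, and arguably more robust. What the paper's heavier computation buys is the explicit values of $m$ and $n$: the remark immediately following the theorem, which characterizes precisely when $W_K(t)-W_{K'}(t)=0$ (conditions on $\e$, $\de$, the parities, and the arc labels $j$, $k$), requires those exponents and cannot be recovered from your argument. One small wording slip: in your \emph{resp.} construction, for the $FU$ case the endpoint $h_j$ moved past \emph{is} a head of $c_j$; what you need (and what your argument actually uses) is that the moved-past endpoint is never the \emph{opposite-type} endpoint of either involved arrow, so the head-before-tail order of $c_i$ and $c_j$ is unchanged as seen from any arc other than the sliver. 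This does not affect the validity of the proof.
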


\begin{proof}
Figure \ref{F:owpfu} shows the result of performing an $FU$ move on a Gauss diagram, and how it affects the labels on the arcs of the diagram.  The signs of the crossings, $\pm 1$, are represented by $\e$ and $\de$.  The only arc that changes its label is the arc between the heads of the two arrows involved.  It is also important to note that the parities of the two crossings (odd or even) will change.

\begin{figure}[htbp]
\begin{center}
\scalebox{.75}{\includegraphics{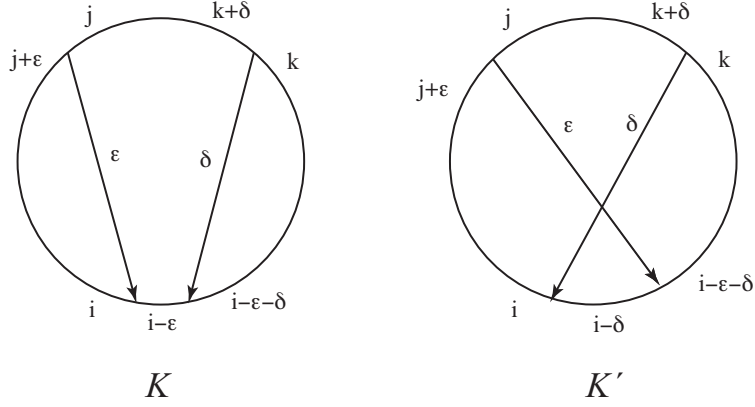}}
\end{center}
\caption{Performing a forbidden $FU$ move.}
\label{F:owpfu}
\end{figure}

Let $c_\e$ and $c_\de$ denote the crossings in knot $K$ with signs $\e$ and $\de$, respectively; let $c'_\e$ and $c'_\de$ denote the corresponding crossings in $K'$.  Then in $K$, $N(c_\e) = i - j + (1-\e)$ and $N(c_\de) = i-k + (1-\e-\de)$.  In $K'$, $N(c'_\e) = i - j + (1-\e-\de)$ and $N(c'_\de) = i-k + (1-\de)$.  The labels (and parities) of all other crossings in the knots are the same.  Now we have four cases, which are summarized in the table below:

\begin{center}
\begin{tabular}{l|c|c}
$W_K - W_{K'}$ & $c_\e$ is odd & $c_\e$ is even \\ \hline
$c_\de$ is odd & \parbox[c][1.5\height]{1.7in}{$\e t^{N(c_\e)} + \de t^{N(c_\de)} \\= \e t^{i-j+1-\e} + \de t^{i-k+1-\e-\de}$} & \parbox{1.9in}{$-\e t^{N(c'_\e)} + \de t^{N(c_\de)} \\= -\e t^{i-j+1-\e-\de} + \de t^{i-k+1-\e-\de}$} \\ \hline
$c_\de$ is even & \parbox[c][1.5\height]{1.7in}{$\e t^{N(c_\e)} - \de t^{N(c'_\de)} \\= \e t^{i-j+1-\e} - \de t^{i-k+1-\de}$} & \parbox{1.8in}{$-\e t^{N(c'_\e)} - \de t^{N(c'_\de)} \\= -\e t^{i-j+1-\e-\de} - \de t^{i-k+1-\de}$}
\end{tabular}
\end{center}

We now perform a similar analysis for an $FO$ move.  Figure \ref{F:owpfo} shows the result of performing an $FO$ move.

\begin{figure}[htbp]
\begin{center}
\scalebox{.75}{\includegraphics{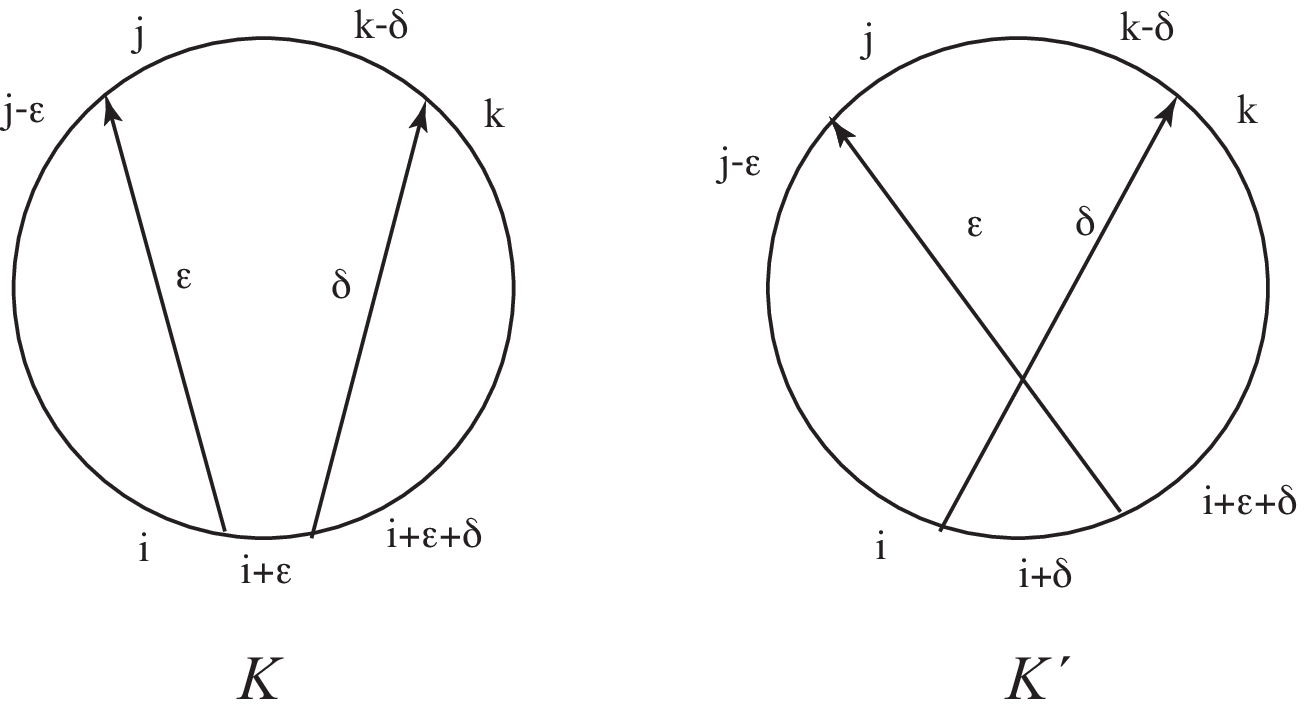}}
\end{center}
\caption{Performing a forbidden $FO$ move.}
\label{F:owpfo}
\end{figure}
As before, let $c_\e$ and $c_\de$ denote the crossings in knot $K$ with signs $\e$ and $\de$, respectively; let $c'_\e$ and $c'_\de$ denote the corresponding crossings in $K'$.  Then in $K$, $N(c_\e) = j-i + (1-\e)$ and $N(c_\de) = k-i + (1-\e-\de)$.  In $K'$, $N(c'_\e) = j-i + (1-\e-\de)$ and $N(c'_\de) = k-i + (1-\de)$.  The labels (and parities) of all other crossings in the knots are the same.  Now we have four cases, which are summarized in the table below:

\begin{center}
\begin{tabular}{l|c|c}
$W_K - W_{K'}$ & $c_\e$ is odd & $c_\e$ is even \\ \hline
$c_\de$ is odd & \parbox[c][1.5\height]{1.7in}{$\e t^{N(c_\e)} + \de t^{N(c_\de)} \\= \e t^{j-i+1-\e} + \de t^{k-i+1-\e-\de}$} & \parbox{1.9in}{$-\e t^{N(c'_\e)} + \de t^{N(c_\de)} \\= -\e t^{j-i+1-\e-\de} + \de t^{k-i+1-\e-\de}$} \\ \hline
$c_\de$ is even & \parbox[c][1.5\height]{1.7in}{$\e t^{N(c_\e)} - \de t^{N(c'_\de)} \\= \e t^{j-i+1-\e} - \de t^{k-i+1-\de}$} & \parbox{1.9in}{$-\e t^{N(c'_\e)} - \de t^{N(c'_\de)} \\= -\e t^{j-i+1-\e-\de} - \de t^{k-i+1-\de}$}
\end{tabular}
\end{center}
In every case, we have $\OWP - W_{K'}(t) = \pm t^m \pm t^n$ for some integers $m$ and $n$.

\end{proof}

We remark that $\OWP - W_{K'}(t) = 0$ only in the following circumstances: \begin{enumerate}
	\item $\e = \de$, the crossings have opposite parity, and $j = k$
	\item $\e = -\de$, the crossings have the same parity, and $j = k+\e$ (for an $FU$ move) or $j = k-\e$ (for an $FO$ move)
\end{enumerate}

\begin{cor} \label{C:owp}
If $\OWP = \sum{b_i t^i}$, then $F(K) \geq \frac{1}{2}\sum{\vert b_i\vert}$.
\end{cor}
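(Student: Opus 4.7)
The plan is to deduce this corollary as a direct consequence of Theorem \ref{T:OWP} by tracking the $\ell^1$-norm of the coefficients. Define, for any Laurent polynomial $W(t) = \sum_i a_i t^i \in \Z[t,t^{-1}]$, the quantity $\|W\|_1 = \sum_i |a_i|$. This satisfies the triangle inequality $\|W + V\|_1 \leq \|W\|_1 + \|V\|_1$.

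First I would note that $W_K(t)$ is an invariant of virtual knots (it is unchanged by the extended Reidemeister moves, since Cheng's construction only depends on the Gauss diagram, whose structure modulo these moves is preserved). Therefore in any unknotting sequence for $K$, the polynomial $W_K(t)$ only changes when a forbidden move is applied. Next, by Theorem \ref{T:OWP}, a single forbidden move alters $W_K(t)$ by a polynomial of the form $\pm t^m \pm t^n$, whose $\ell^1$-norm is at most $2$. By the triangle inequality, a single forbidden move therefore changes $\|W_K\|_1$ by at most $2$.

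Now suppose $F(K) = N$, so there is a sequence of extended Reidemeister moves and $N$ forbidden moves transforming $K$ into the unknot. Let $W_0 = W_K(t)$, and let $W_j$ be the odd writhe polynomial after the $j$-th forbidden move, so $W_N$ is the polynomial of the unknot, which is $0$. By the triangle inequality and the observation above,
\begin{equation*}
\sum_i |b_i| = \|W_0\|_1 \leq \|W_0 - W_N\|_1 \leq \sum_{j=1}^{N} \|W_{j-1} - W_j\|_1 \leq 2N = 2 F(K).
\end{equation*}
Dividing by $2$ yields $F(K) \geq \tfrac{1}{2} \sum_i |b_i|$, as claimed.

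There is no real obstacle here: the whole argument is a standard ``monovariant'' or ``distance-in-norm'' argument once Theorem \ref{T:OWP} is in hand. The only thing one must be careful about is to insist that Reidemeister moves genuinely leave $W_K(t)$ unchanged, so that the count $N$ of forbidden moves — rather than the total length of the unknotting sequence — controls the total change in $\|W_K\|_1$.
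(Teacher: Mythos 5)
Your proposal is correct and follows essentially the same route as the paper's proof: the paper also argues that by Theorem \ref{T:OWP} each forbidden move changes $\OWP$ by $\pm t^m \pm t^n$ and hence changes $\sum \vert b_i \vert$ by at most $2$, giving $F(K) \geq \frac{1}{2}\sum \vert b_i \vert$. Your version merely makes explicit the details the paper leaves implicit (the $\ell^1$-norm triangle inequality, the telescoping over the unknotting sequence, and the invariance of $\OWP$ under extended Reidemeister moves), which is a fine and careful write-up of the same argument.
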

\begin{proof}
By Thm. ~\ref{T:OWP}, any forbidden move changes \OWP\ by $\pm t^m \pm t^n$, and hence changes $\sum{\vert b_i\vert}$ by at most 2.  Thus, $F(K) \geq \frac{1}{2}\sum{\vert b_i\vert}$.
\end{proof}

\begin{example}\label{E:knot447} Consider the virtual knot $K$ in Fig. ~\ref{knot447} (knot 4.47 from \cite{Green}). The odd writhe is zero, but $\OWP = t^4-2t^2+1$, so the forbidden number is at least two.
We now show that the forbidden number is exactly two. Referring to the Gauss diagram in Fig.~\ref{knot447} we use a forbidden move $FU$ to move the head of 4 past the head of 2. Then we use $R1$ to remove crossing 4. Next we move the head of 3 past the head of 2 using a second $FU$ move. Then, we remove crossing 3 with $R1$. Finally, we remove crossings 1 and 2 with $R2$.

\end{example}





\section{Forbidden Numbers of Knots with Small Crossing Number}\label{forbiddensmall}

The forbidden numbers of many knots with crossing number $\le4$ were computed by Sakurai \cite{Sakurai} by analyzing the effect of forbidden moves on a polynomial defined by Henrich \cite{Henrich}. In this section,  we update and expand the table given in \cite{Sakurai}, incorporating the findings from the present paper.  The results are shown in Table \ref{Ta:forbidden}.  The table lists: \begin{itemize}
\item[$H:$] the lower bound derived by Sakurai from Henrich's polynomial
\item[$OW:$] the lower bound given by Corollary \ref{C:owp} from the odd writhe polynomial
\item[$S:$] the value of $F(K)$ given by Sakurai (when provided)
\item[$F(K):$] the forbidden number of the knot. When the forbidden number has not yet been determined, the best known bounds are listed in the format {\it lower bound--upper bound}.
\end{itemize}

There is considerable overlap between the bounds obtained using the two polynomials (Henrich vs.\ odd writhe). In addition to analyzing all virtual knots with 4 or fewer crossings, we compared the lower bounds from Henrich's polynomial and the odd writhe polynomial for all virtual knots with 5 or 6 crossings (using a computer program and the list of Gauss codes from \cite{Green}).  The results are shown in Table \ref{Ta:census}. 

\begin{table}[htbp]
\begin{center}
\begin{tabular}{|c|c|c|c|c|}
\hline
Crossing number & $H > OW$ & $OW > H$ & $H = OW$ & Total \\ \hline
2 & 0 & 0 & 1 & 1 \\
3 & 3 & 0 & 4 & 7 \\
4 & 29 & 2 & 77 & 108 \\
5 & 3312 & 88 & 5830 & 9230 \\
6 & 143,686 & 10,218 & 200,774 & 354,678 \\ \hline
\end{tabular}
\end{center}
\caption{Comparison of lower bounds from Henrich and odd writhe polynomials} \label{Ta:census}
\end{table}

Among virtual knots with four crossings, the odd writhe polynomial gives a better lower bound for knots 4.26 and 4.47, and we were able to determine the forbidden number. In several other cases, we were able to determine the forbidden number by finding an unknotting sequence that realized the known lower bound, as in Ex. ~\ref{E:knot447}. An asterisk ($*$) indicates the forbidden number is determined by the methods of the present paper, i.e., these knots were not listed in Sakurai's table. Gauss diagrams for these knots are shown in Figure~\ref{newknots}, and their unknotting sequences are given in Table \ref{Ta:sequences}. When the forbidden number has not yet been determined, the best known bounds are listed in the format {\it lower bound--upper bound}. Upper bounds are computed in a manner similar to Ex. ~\ref{E:knot447}. The numbering corresponds to \cite{Green}.

\begin{figure}[htbp]
\begin{center}
\scalebox{1}{\includegraphics{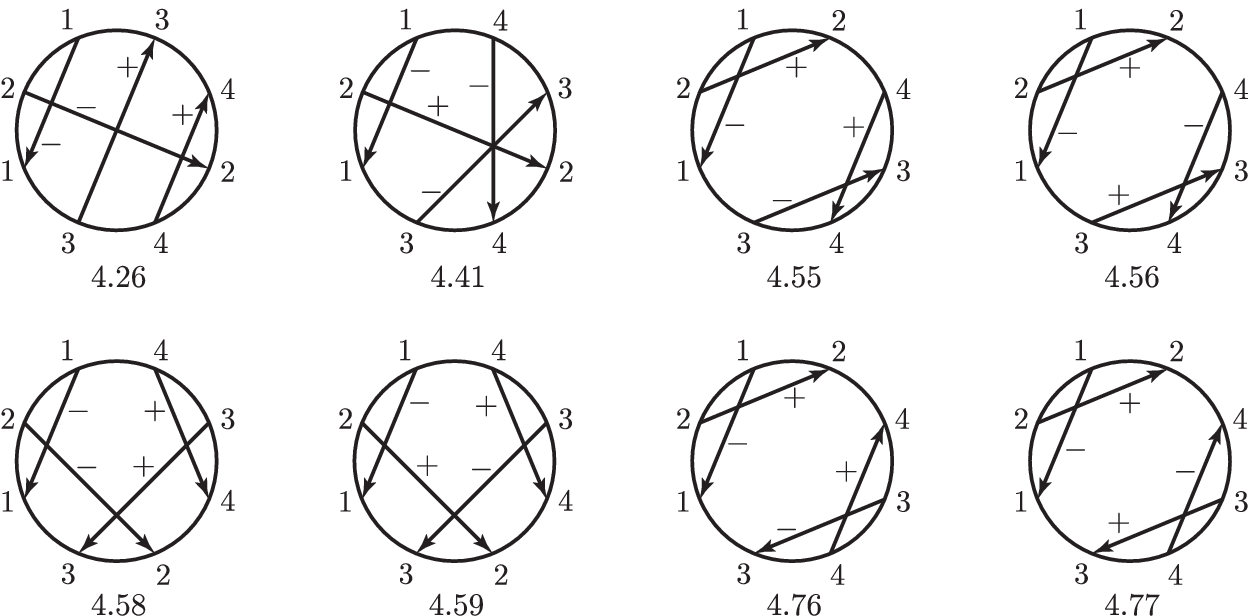}}
\end{center}
\caption{Gauss diagrams.}
\label{newknots}
\end{figure}

To interpret the notation in the unknotting sequences in Table \ref{Ta:sequences}, consider the first example: To unknot virtual knot 4.26, switch the tails of arrows 1 and 2 with a forbidden over move. Then switch the heads of 2 and 4 with a forbidden under move. Remove 1 with a Reidemeister move ($R1$). Then remove 4 ($R1$). Finally, remove 2 and 3 ($R2$).

\begin{table}[htbp]
\begin{center}
\begin{tabular}{|c|l|}
\hline
Knot & Unknotting sequence \\ \hline
4.26 & $FO(1,2)$, $FU(2,4)$, $R1(1)$, $R1(4)$, $R2(2,3)$ \\
4.41 & $FO(1,2)$, $R1(1)$, $R2(2,4)$, $R1(3)$ \\
4.55 & $FO(1,2)$, $R1(1)$, $R1(2)$, $R2(3,4)$ \\
4.56 & Same as 4.55 \\
4.58 & Same as 4.41 \\ 
4.59 & $FU(2,3)$, $R2(1,2)$, $R2(3,4)$ \\
4.76 & Same as 4.55 \\
4.77 & Same as 4.55 \\ \hline
\end{tabular}
\end{center}
\caption{Unknotting sequences for some 4-crossing knots.} \label{Ta:sequences}
\end{table}



 





\begin{table}[htbp]
\begin{center}
\begin{tabular}{| l c c c c | l c c c c | l c c c c |}
\hline
\ $K$ & $H$ & $OW$ & $S$ & $F(K)$ & \ $K$ & $H$ & $OW$ & $S$ & $F(K)$ & \ $K$ & $H$ & $OW$ & $S$ & $F(K)$\\ \hline
0.1  & 0 & 0 & 0 & 0 & 4.31 & 0 & 0 & & \B{1}{2} & 4.70 & 1 & 1 & & \B{1}{2}  \\
2.1  & 1 & 1 & 1 & 1 & 4.32 & 1 & 1 & 1 & 1 & 4.71 & 0 & 0 & & \B{1}{2} \\
3.1  & 1 & 1 & 1 & 1 & 4.33 & 1 & 1 & 1 & 1 & 4.72 & 0 & 0 & & \B{1}{2} \\
3.2  & 1 & 1 & 1 & 1 & 4.34 & 1 & 1 & 1 & 1 & 4.73 & 2 & 2 & 2 & 2  \\
3.3  & 2 & 1 & 2 & 2 & 4.35 & 1 & 1 & 1 & 1 & 4.74 & 1 & 1 & 1 & 1  \\
3.4  & 1 & 1 & 1 & 1 & 4.36 & 2 & 0 & 2 & 2 & 4.75 & 0 & 0 & & \B{1}{2}  \\
3.5  & 2 & 0 & & \B{2}{3} & 4.37 & 3 & 1 & 3 & 3 & 4.76* & 0 & 0 & & 1  \\
3.6  & 0 & 0 & & \B{1}{4} & 4.38 & 1 & 1 & 1 & 1 & 4.77* & 0 & 0 & & 1  \\
3.7  & 2 & 0 & & \B{2}{3} & 4.39 & 1 & 1 & 1 & 1 & 4.78 & 3 & 1 & & \B{3}{4}  \\
4.1  & 2 & 2 & 2 & 2 & 4.40 & 1 & 1 & 1 & 1 & 4.79 & 1 & 1 & 1 & 1  \\
4.2  & 0 & 0 & & \B{1}{2} & 4.41* & 0 & 0 &  & 1 & 4.80 & 3 & 2 & 3 & 3  \\
4.3  & 2 & 2 & 2 & 2 & 4.42 & 1 & 1 & 1 & 1 & 4.81 & 2 & 2 & 2 & 2  \\
4.4  & 1 & 1 & 1 & 1 & 4.43 & 2 & 2 & 2 & 2 & 4.82 & 3 & 1 & 3 & 3 \\
4.5  & 1 & 1 & 1 & 1 & 4.44 & 1 & 1 & & \B{1}{2} & 4.83 & 2 & 1 & 2 & 2 \\
4.6  & 0 & 0 & & \B{1}{2} & 4.45 & 2 & 2 & 2 & 2 & 4.84 & 1 & 1 & & \B{1}{2} \\
4.7  & 2 & 2 & 2 & 2 & 4.46 & 0 & 0 &  & \B{1}{2} & 4.85 & 2 & 0 &  & \B{2}{3} \\
4.8  & 0 & 0 & & \B{1}{2} & 4.47* & 1 & 2 & & 2 & 4.86 & 2 & 0 & & \B{2}{3} \\
4.9  & 1 & 1 & & \B{1}{3} & 4.48 & 3 & 1 & 3 & 3 & 4.87 & 4 & 1 & & \B{4}{5} \\
4.10 & 1 & 1 & & \B{1}{2} & 4.49 & 1 & 1 & 1 & 1 & 4.88 & 1 & 1 & 1 & 1 \\
4.11 & 2 & 1 & 2 & 2 & 4.50 & 1 & 1 & 1 & 1 & 4.89 & 4 & 0 & 4 & 4 \\
4.12 & 0 & 0 & & \B{1}{2} & 4.51 & 0 & 0 &  & \B{1}{2} & 4.90 & 0 & 0 & & \B{1}{2} \\
4.13 & 0 & 0 & & \B{1}{2} & 4.52 & 1 & 1 & 1 & 1 & 4.91 & 4 & 2 & & \B{4}{6}  \\
4.14 & 1 & 1 & & \B{1}{2} & 4.53 & 2 & 2 & 2 & 2 & 4.92 & 3 & 1 & & \B{3}{5} \\
4.15 & 2 & 1 & 2 & 2 & 4.54 & 1 & 1 & 1 & 1 & 4.93 & 2 & 1 & 2 & 2  \\
4.16 & 0 & 0 & & \B{1}{2} & 4.55* & 0 & 0 & & 1 & 4.94 & 1 & 1 & & \B{1}{6}\\
4.17 & 1 & 1 & 1 & 1 & 4.56* & 0 & 0 &  & 1 & 4.95 & 3 & 1 & & \B{3}{5} \\
4.18 & 1 & 1 & 1 & 1 & 4.57 & 1 & 1 & 1 & 1 & 4.96 & 2 & 0 & & \B{2}{3}\\
4.19 & 1 & 1 & & \B{1}{2} & 4.58* & 0 & 0 & & 1 & 4.97 & 1 & 1 & & \B{1}{2} \\
4.20 & 1 & 1 & 1 & 1 & 4.59* & 0 & 0 & & 1 & 4.98 & 0 & 0 & & \B{1}{3} \\
4.21 & 2 & 0 & 2 & 2 & 4.60 & 1 & 1 & 1 & 1 & 4.99 & 0 & 0 & & \B{1}{3} \\
4.22 & 1 & 1 & 1 & 1 & 4.61 & 1 & 1 &  & \B{1}{5} & 4.100 & 2 & 2 & & \B{2}{7} \\
4.23 & 1 & 1 & 1 & 1 & 4.62 & 3 & 1 & & \B{1}{4} & 4.101 & 3 & 1 & & \B{3}{4} \\
4.24 & 2 & 1 &  & \B{2}{3} & 4.63 & 2 & 1 & 2 & 2 & 4.102 & 2 & 2 & & \B{2}{4} \\
4.25 & 2 & 2 & 2 & 2 & 4.64 & 1 & 1 & 1 & 1 & 4.103 & 3 & 1 & & \B{3}{6} \\
4.26* & 1 & 2 & & 2 & 4.65 & 2 & 0 & & \B{2}{4} & 4.104 & 3 & 1 & & \B{3}{6} \\
4.27 & 1 & 1 & & \B{1}{2} & 4.66 & 2 & 1 & & \B{2}{3} & 4.105 & 0 & 0 & & \B{1}{5} \\
4.28 & 2 & 2 & 2 & 2 & 4.67 & 1 & 1 & & \B{1}{3} & 4.106 & 2 & 0 & & \B{2}{4} \\
4.29 & 2 & 1 & 2 & 2 & 4.68 & 0 & 0 & & \B{1}{2} & 4.107 & 0 & 0 & & \B{1}{4} \\
4.30 & 1 & 1 & & \B{1}{2} & 4.69 & 1 & 1 & & \B{1}{3} & 4.108 & 0 & 0 & & \B{1}{4} \\
\hline
\end{tabular}
\end{center}
\caption{Table of forbidden numbers for virtual knots with at most four crossings.} \label{Ta:forbidden}

\end{table}



\begin{thebibliography}{99}

\bibitem{CKS} S.\ Carter, S.\ Kamada and M.\ Saito. \emph{Stable equivalence of knots on surfaces and virtual cobordisms.} J.\ Knot Theory Ramifications, 11:311--322, 2002.

\bibitem{Cheng} Z.\ Cheng. \emph{A polynomial invariant of virtual knots.} To appear in Proc.\ AMS.

\bibitem{Cromwell} P.\ Cromwell. \emph{Knots and Links}. Cambridge University Press, Cambridge, UK, 2004.

\bibitem{DyKa} H.\ Dye and L.\ Kauffman. \emph{Virtual crossing number and the arrow polynomial.} J.\ Knot Theory Ramifications, 18:1335--1357, 2009.

\bibitem{fm} T.\ Fleming and B.\ Mellor.  \emph{Intrinsic linking and knotting in virtual spatial graphs.} Alg. Geom. Top., 7:583--601, 2007.

\bibitem{GPV} M.\ Goussarov, M.\ Polyak, and O.\ Viro. \emph{Finite-type invariants of classical and virtual knots.} Topology, 39:1045--1068, 2000.

\bibitem{Green} J.\ Green.  \emph{A Table of Virtual Knots} http://www.math.toronto.edu\linebreak[0]/drorbn\linebreak[0]/Students\linebreak[0]/GreenJ/

\bibitem{Henrich} A.\ Henrich, \emph{A sequence of degree one Vassiliev invariants for virtual knots}. J. Knot Theory Ramifications 19(4) (2010) 461Ð487.

\bibitem{Henrichpseudo} A. \ Henrich, N. \ MacNaughton, S. \ Narayan, O. \ Pechenk, and J. \ Townsend, \emph{Classical and virtual pseudodiagram theory and new bounds on unknotting numbers and genus.} J. \ Knot Theory Ramifications, 20(4) (2011) 625 -- 616.

\bibitem{Kamada} S.\ Kamada. \emph{Braid presentation of virtual knots and welded knots.} Osaka J.\ Math., 44, No.\ 2:441--458, 2007.

\bibitem{Kanenobu} T.\ Kanenobu. \emph{Forbidden moves unknot a virtual knot.} J.\ Knot Theory Ramifications, 10:89--96, 2001. 

\bibitem{KauffmanVKT} L.\ H.\ Kauffman. \emph{Virtual knot theory.} Europ.\ J.\ Combinatorics, 20:663--691, 1999.

\bibitem{KaSelfLink} L.\ H.\ Kauffman.  \emph{A self-linking invariant of virtual knots.} Fund. Math., 184:135--158, 2004.

\bibitem{KauffmanINTRO} L.\ H.\ Kauffman. \emph{Introduction to virtual knot theory.} arXiv:1101.0665v1 [math.GT] 4 Jan 2011.

\bibitem{KaMa} L.\ H.\  Kauffman and V.\ O.\ Manturov. \emph{Virtual knots and links.} (Russian) Tr.\ Mat.\ Inst.\ Steklova 252 (2006), Geom.\ Topol., Diskret.\ Geom.\ i Teor.\ Mnozh., 114--133. Translation in Proc. Steklov Inst. Math. no. 1, 252:104Ð121, 2006.

\bibitem{KrMr} P.\ Kronheimer and T.\ Mrowka. \emph{Gauge Theory for embedded surfaces I}, Topology 32:773--826, 1993.

\bibitem{Kuperberg} G.\ Kuperberg. \emph{What is a virtual link?} Algebraic \& Geometric Topology, 3:587--591, 2003.

\bibitem{Murasugi} K.\ Murasugi. \emph{On a certain numerical invariant of link types.} Trans.\ Amer.\ Math.\ Soc., 117:387--422, 1965.

\bibitem{Nelson} S.\ Nelson. \emph{Unknotting virtual knots with Gauss diagram forbidden moves.} J.\ Knot Theory Ramifications, 10:931--935, 2001.

\bibitem{Sakurai} M.\ Sakurai. \emph{An Estimate of the Unknotting Numbers for Virtual Knots by Forbidden Moves.} J.\ Knot Theory Ramifications, Vol. 22, No. 3 (2013) 1350009.

\bibitem{Satoh} S.\ Satoh. \emph{Virtual knot presentation of ribbon torus-knots.} J.\ Knot Theory Ramifications, 9:531--542, 2000.

\bibitem{Schubert} H.\ Schubert. \emph{\"{U}ber eine numerische Knoteninvariante.} Math.\ Z.\ 61:245--288, 1954.

\end{thebibliography}
\end{document}